\newtheorem{Theorem}{Theorem}[section]
\newtheorem{Lemma}[Theorem]{Lemma}
\newtheorem{Corollary}{Corollary}[section]
\newtheorem{Definition}[Theorem]{Definition}
\journal{XXX}
\begin{document}

\begin{frontmatter}

\title{On Symmetric Invertible Binary Pairing Functions}

\author[]{Jianrui Xie\corref{mycorrespondingauthor}}
\cortext[mycorrespondingauthor]{Corresponding author}
\ead{jrxie93@stu.xidian.edu.cn}

\address[mymainaddress]{ISN Laboratory, Xidian University, Xi'an 710071, China}

\begin{abstract}
We construct a symmetric invertible binary pairing function $F(m,n)$ on the set of positive integers with a property of $F(m,n)=F(n,m)$. Then we provide a complete proof of its symmetry and bijectivity, from which the construction of symmetric invertible binary pairing functions on any custom set of integers could be seen.
\end{abstract}

\begin{keyword}
set theory, symmetric pairing function, bijection, signum function
\MSC[2010] 03E05\sep 03E75\sep 03E99
\end{keyword}

\end{frontmatter}

\section{Introduction}

Pairing functions were first used to demonstrate that the cardinalities of the set of rationals (denoted by $\mathbb Q$) and the set of natural numbers (denoted by $\mathbb N^+$) are the same \cite{Pairingfunction}. They arise in coding problems as well, where a vector of integer values is to be folded onto a single integer value reversibly. For example, once we need to track pairs of integer values but the protocol, schema or API (Application Program Interface) only accept scalars, using pairing functions should be reserved as a hack of last resort if the system can not be modified to accommodate a collection. When referred to memory management, a pairing function could be a perfect hashing function essential to control the memory footprint but also to speed up computation\cite{Baisalov2001Fragments}.
They are even involved in engineering projects such as authentication of users \cite{Spalka2013Computer} and automating grammar comparison \cite{Madhavan2015Automating}.

Generally a pairing function $P(m,n)$ on a set $D$ is a bijection from $D \times D$ to $D$. It is invertible and associates each pair of members from $D$ with a single member of $D$ uniquely. The Cantor pairing function, which is on the set of natural numbers (denoted by $\mathbb {N}$), is shown as (1) below \cite{Cantor1900Ein}.
\begin{equation}
C(m,n)=\frac{1}{2} (m+n)(m+n+1)+n
\end{equation}
Let $m$ and $n$ represent the row and column indexes, the following pattern in Figure 1 offers an enumeration of $C(m,n)$. Various kind of pairing functions have been designed to fit increasing needs after George Cantor \cite{Lisi2007Some}\cite{Cegielski2001Decidability}\cite{Cegielski2000The}. So there is reason to believe in the potential values of symmetric binary pairing functions in the future.
\begin{gather*}
\bordermatrix
{&0&1&2&3&4&5&6&7&8&9& \cdots \cr
0&0 & 2 & 5 & 9 & 14 & 20 & 27&35&44&54& \cdots \cr
1&1  & 4 & 8 & 13 & 19 & 26&34&43&53&64& \cdots \cr
2&3  & 7 &12& 18 & 25&33&42&52&63&75&\cdots \cr
3&6  &11&17& 24&32&41&51&62&74&87&\cdots \cr
4&10&16&23&31&40&50&61&73&86&100&\cdots \cr
5&15&22&30&39&49&60&72&85&99&104&\cdots \cr
6&21&29&38&48&59&71&84&98&113&129&\cdots \cr
7&28&37&47&58&70&83&97&112&128&145&\cdots \cr
8&36&46&57&69&82&96&111&127&144&162&\cdots \cr
9&45&56&68&81&95&110&126&143&161&180&\cdots \cr
\vdots&\vdots&\vdots&\vdots&\vdots&\vdots&\vdots&\vdots&\vdots&\vdots&\vdots&\ddots \cr
}
\end{gather*}

\begin{center}
Figure 1: An enumeration of Cantor pairing function $C(m,n)$.
\end{center}

The present paper primarily focuses on a complete demonstration of the symmetry and bijectivity of the pairing function we are going to provide. Then if we reverse steps of the proof, a method to construct a corresponding family of symmetric invertible binary pairing functions on a family of integer set would be evident.

\section{Preliminaries}\label{sec:prel}
\noindent
For convenience of the proof we make declarations of a few necessary notations in a concise manner.

The $i$th element of an ordered set $A$ is denoted by $A(i)$. Two extremum functions $max(x,y)$ and $min(x,y)$ are defined as in probability theory.
\begin{equation}\label{}
max(x,y)=\frac {1}{2}(x+y+| x-y |)
\end{equation}
\begin{equation}\label{}
min(x,y)=\frac {1}{2}(x+y-| x-y |)
\end{equation}

On the basis of the signum function, we define that
\begin{eqnarray}
sgn^{\prime}(x)=\frac{sgn^2(x)+sgn(x)}{2}=\left\{\begin {array}{ll}
       0, & \textrm {if  $ x\le0$ }\\
       1,  &\textrm {if  $ x>0.$  }\\
       \end {array}\right.
\end{eqnarray}

Let $m \in \mathbb {N}$, we declare that for any given integer $n$ the least non-negative residue of $n \ modulo \ m$ is denoted by $n \ \% \ m$.

For $F(m,n)$, we present certain definitions of set theory as follows \cite{Foreman2010Handbook} \cite{Johnsonbaugh1986Discrete}.


\begin{Definition}\label{definition1}
$A$ is called to be an ordered subset of $B$ if\\
\indent (1) $A$ is a subset of $B$;\\
\indent (2) $A$ is ordered, $B$ is unordered.
\end{Definition}

\begin{Definition}\label{definition2}
$S_{U}$ is an unordered set of all positive integer pairs such that
\begin{equation}\label{}
S_{U}=\{(x,y) |x \in \mathbb {N^+},y \in \mathbb {N^+}\}.
\end{equation}
\end{Definition}

\begin{Definition}\label{definition3}
Let $i \in \mathbb {N^+} \setminus \{1\}$, $S_i$ is an ordered subset of $S_{U}$ if it satisfies:\\
\indent (1) For any $(x,y)\in S_i$, we have $x+y=i$ and $x\ge y$;\\
\indent (2) Let $(x_1,y_1)=S_i(a)\in S_i$, and $(x_2,y_2)=S_i(b)\in S_i$.   If $y_1<y_2$, then we have $a<b$, else if $y_1>y_2$, we have $a>b$.
\end{Definition}
\indent From DEFINITION \ref{definition3} we conclude that the size of $S_i$ is $ \lfloor {\frac {i}{2}} \rfloor $.

\begin{Definition}\label{definition4}
$S_{O}$ is an ordered subset of $S_{U}$ if it satisfies:\\
\indent (1) For any $i\in \{i | i \in \mathbb {N^+}, i \ge 2\}$, $S_i$ is an ordered subset of $S_{O}$;\\
\indent (2) For $(x,y) \in S_{O}$, $(x,y) \in S_{x+y}$;\\
\indent (3) For $(x_1,y_1) \in S_a$ and $(x_2,y_2) \in S_a$, if $(x_1,y_1)=S_{O}(m)$, $(x_2,y_2)=S_{O}(n)$ and $y_1<y_2$, then $m<n$;\\
\indent (4) For $(x_1,y_1) \in S_a$ and $(x_2,y_2) \in S_b$, if $(x_1,y_1)=S_{O}(m)$, $(x_2,y_2)=S_{O}(n)$ and $a<b$, then $m<n$.
\end{Definition}
\indent From DEFINITION \ref{definition4} we conclude that the size of $S_{O}$ is infinite.

\begin{Definition}\label{definition5}
A binary function $F(x,y)$ whose domain $D$ is an ordered subset of $S_{U}$ is called binary strictly monotonous if it satisfies either one of\\
\indent (1) For $(x_1,y_1) \in D$ and $(x_2,y_2) \in D$, if $(x_1,y_1)=D(i)$, $(x_2,y_2)=D(j)$, and $i<j$ then $F(x_1,y_1)<F(x_2,y_2)$ holds;\\
\indent (2) For $(x_1,y_1) \in D$ and $(x_2,y_2) \in D$, if $(x_1,y_1)=D(i)$, $(x_2,y_2)=D(j)$, and $i<j$ then $F(x_1,y_1)>F(x_2,y_2)$ holds.
\end{Definition}
\begin{Definition}\label{definition6}
A binary set $N$ is called to be the symmetric set of a binary set $M$\ (denoted by $S$-$M$)\ if\\
\indent (1) For $(x,y) \in M$, $(y,x) \in N$;\\
\indent (2) For $(x,y) \in N$, $(y,x) \in M$.
\end{Definition}


\section{Demonstration}\label{sec:Dem}
We now present the pairing function below that will be discussed mainly about.
\begin{equation}\label{}
F(m,n)=\frac{1}{4} \times [(m+n-1)^2-(m+n-1)\ \% \ 2]+min(m,n)
\end{equation}
It is an evident fact that $F(m,n)$ has a property of symmetry such that
$$\forall m\in \mathbb {N^+},\ n \in \mathbb {N^+},\ F(m,n)=F(n,m).$$


\begin{Lemma}\label{lemma1}
For any $ x \in \mathbb {N^+}$, both
\begin{equation}\label{}
x=\lfloor {\frac {x+1}{2}} \rfloor \times 2 - x \ \% \ 2
\end{equation}
and
\begin{equation}
x=\lfloor {\frac {x}{2}} \rfloor \times 2 + x \ \% \ 2
\end{equation}
always hold.
\end{Lemma}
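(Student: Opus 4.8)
The plan is to prove both identities by a direct case analysis on the parity of $x$, since the two right-hand sides differ only in how the floor function rounds $\frac{x}{2}$ versus $\frac{x+1}{2}$, and this rounding behaviour is governed entirely by whether $x$ is even or odd. In both cases the role of $x \ \% \ 2 \in \{0,1\}$ is to record the parity, so the whole statement reduces to Euclidean division of $x$ by $2$.

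First I would dispatch the second identity, which is essentially the statement of Euclidean division itself. By definition $x \ \% \ 2$ is the remainder and $\lfloor \frac{x}{2} \rfloor$ the corresponding quotient, so $x = \lfloor \frac{x}{2} \rfloor \times 2 + x \ \% \ 2$ holds by the uniqueness of quotient and remainder. Concretely, writing $x = 2k$ when $x$ is even gives $\lfloor \frac{x}{2} \rfloor = k$ and $x \ \% \ 2 = 0$, while writing $x = 2k+1$ when $x$ is odd gives $\lfloor \frac{x}{2} \rfloor = k$ and $x \ \% \ 2 = 1$; in both cases the right-hand side collapses back to $x$.

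For the first identity I would reuse the same parity split. When $x = 2k$ we have $x \ \% \ 2 = 0$ and $\lfloor \frac{x+1}{2} \rfloor = \lfloor \frac{2k+1}{2} \rfloor = k$, so the right-hand side equals $2k - 0 = x$. When $x = 2k+1$ we have $x \ \% \ 2 = 1$ and $\lfloor \frac{x+1}{2} \rfloor = \lfloor \frac{2k+2}{2} \rfloor = k+1$, so the right-hand side equals $2(k+1) - 1 = 2k+1 = x$. This closes the first identity. Alternatively, one may first establish the auxiliary relation $\lfloor \frac{x+1}{2} \rfloor = \lfloor \frac{x}{2} \rfloor + x \ \% \ 2$ and substitute it into the right-hand side of the first identity to reduce it to the second, but the explicit parity computation is cleaner and self-contained.

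Since the whole argument is elementary, there is no substantive obstacle. The only point requiring genuine care is the evaluation of $\lfloor \frac{x+1}{2} \rfloor$ in the odd case: the $+1$ in the numerator is exactly what pushes the quotient up by one and compensates for the subtracted remainder, which is the reason the first identity carries a minus sign where the second carries a plus sign. Making this bookkeeping explicit through the two parity cases is enough to verify both equalities for every $x \in \mathbb{N^+}$.
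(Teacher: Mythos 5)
Your proof is correct: the parity split $x=2k$ versus $x=2k+1$ verifies both identities directly, and the second one is just Euclidean division by $2$. The paper itself states Lemma \ref{lemma1} without any proof, so your elementary argument supplies exactly the routine verification the authors left implicit; there is nothing to compare against and no gap to report.
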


\begin{Lemma}\label{lemma2}
Let $ (x_1,y_1) \in S_i$,\ $(x_2,y_2) \in S_i$.  Let $(x_1,y_1)=S_i(a)$ and $(x_2,y_2)=S_i(b)$. If $a<b$, then
\begin{equation*}
    F(x_1,y_1)<F(x_2,y_2).
\end{equation*}
\end{Lemma}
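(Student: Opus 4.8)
The plan is to exploit the fact that both pairs live in the same set $S_i$, so they share the constant sum $x+y=i$ and the constraint $x\ge y$ from DEFINITION \ref{definition3}. These two facts will collapse the comparison of $F$ values down to a comparison of the two $y$-coordinates, after which the ordering of $S_i$ delivers the conclusion immediately.

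First I would translate the hypothesis $a<b$ into a statement about the coordinates. Since $(x_1,y_1)=S_i(a)$ and $(x_2,y_2)=S_i(b)$ both belong to the ordered set $S_i$, part (2) of DEFINITION \ref{definition3} says the indexing is monotone in the $y$-coordinate; the contrapositive of its two clauses shows that $a<b$ forces $y_1<y_2$ (equality $y_1=y_2$ is impossible, as it would make the two pairs identical and hence share one index). Next I would examine the two summands of
\begin{equation*}
F(m,n)=\tfrac{1}{4}\bigl[(m+n-1)^2-(m+n-1)\ \%\ 2\bigr]+\min(m,n).
\end{equation*}
Because $x_1+y_1=i=x_2+y_2$, the argument $m+n-1$ equals $i-1$ for both pairs, so the entire bracketed quadratic term takes one and the same value and cancels when I subtract $F(x_2,y_2)$ from $F(x_1,y_1)$.

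The remaining step is to handle the $\min$ term. Condition (1) of DEFINITION \ref{definition3} guarantees $x_1\ge y_1$ and $x_2\ge y_2$, so $\min(x_1,y_1)=y_1$ and $\min(x_2,y_2)=y_2$. Combining the cancellation with these identities gives
\begin{equation*}
F(x_1,y_1)-F(x_2,y_2)=\min(x_1,y_1)-\min(x_2,y_2)=y_1-y_2<0,
\end{equation*}
which is exactly the desired strict inequality. I do not anticipate a genuine obstacle here; the only point requiring care is the initial bookkeeping — confirming that the monotone indexing of $S_i$ truly yields $y_1<y_2$ from $a<b$, and recognizing that the quadratic part depends on $m+n$ alone so that it plays no role once the sum is fixed. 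Everything else is a one-line subtraction.
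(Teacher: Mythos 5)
Your proposal is correct and follows essentially the same route as the paper: both deduce $y_1<y_2$ from the ordering of $S_i$, observe that the quadratic term depends only on the fixed sum $x+y=i$ and cancels, and reduce the difference of $F$-values to $y_1-y_2<0$ via $\min(x_j,y_j)=y_j$. Your write-up is in fact slightly more explicit than the paper's about why the bracketed term cancels.
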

\begin{proof}
By DEFINITION \ref{definition3}, $x_1 \ge y_1$, $x_2 \ge y_2$. So $y_1<y_2$ if $a<b$.
Since $x_1+y_1=x_2+y_2$, we have $x_1>x_2$.
Therefore,
$$F(x_2,y_2)-F(x_1,y_1)=y_2-y_1>0.$$
\end{proof}

\begin{Lemma}\label{lemma3}
Let $ (x_1,y_1) \in S_i$, $(x_2,y_2) \in S_{i+1}$. We have
\[F(x_1,y_1)<F(x_2,y_2).\]
\end{Lemma}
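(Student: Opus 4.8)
The plan is to reduce this cross-diagonal inequality to a single comparison between the largest value of $F$ on $S_i$ and the smallest value of $F$ on $S_{i+1}$, and then to settle that comparison by a short parity computation. First I would record that within each ordered set the first summand of $F$ is constant: for every $(x,y)\in S_i$ we have $x+y=i$ by DEFINITION \ref{definition3}, so that term equals $\frac14[(i-1)^2-(i-1)\%2]$, while the only varying part is $\min(x,y)=y$ (using $x\ge y$). Thus $F$ restricted to $S_i$ is an increasing function of $y$, in agreement with LEMMA \ref{lemma2}.

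Next, by LEMMA \ref{lemma2}, $F$ attains its maximum over $S_i$ at the element of largest index, i.e. the one with largest $y$, namely $y=\lfloor i/2\rfloor$; and $F$ attains its minimum over $S_{i+1}$ at the element of smallest index, i.e. the pair $(i,1)$ with $y=1$. Chaining $F(x_1,y_1)\le\max_{S_i}F$ and $\min_{S_{i+1}}F\le F(x_2,y_2)$, it therefore suffices to prove the single inequality
\[
\frac14\big[(i-1)^2-(i-1)\%2\big]+\Big\lfloor\tfrac{i}{2}\Big\rfloor
\;<\;
\frac14\big[i^2-i\%2\big]+1 .
\]

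To establish this I would split on the parity of $i$, using the second identity of LEMMA \ref{lemma1} (equivalently $\lfloor i/2\rfloor=(i-i\%2)/2$) to clear the floor and keep the two complementary residues $(i-1)\%2$ and $i\%2$ under control. When $i=2k$ the left side collapses to $k^2$ and the right side to $k^2+1$; when $i=2k+1$ the left side collapses to $k^2+k$ and the right side to $k^2+k+1$. In either case the two sides differ by exactly $1$, which gives the strict inequality and, as a bonus, shows that $F$ enumerates the indices of $S_i$ and $S_{i+1}$ consecutively.

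The main obstacle is purely bookkeeping: maintaining the correct parity case for $(i-1)\%2$, $i\%2$ and $\lfloor i/2\rfloor$ simultaneously, since a single sign slip would spoil the clean ``gap of $1$'' on which both this inequality and the eventual bijectivity of $F$ depend. Once the reduction to the extremal elements is carried out via LEMMA \ref{lemma2}, no structural input beyond LEMMAS \ref{lemma1} and \ref{lemma2} is required.
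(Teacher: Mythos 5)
Your proposal is correct and follows essentially the same route as the paper: both use LEMMA \ref{lemma2} to reduce the claim to comparing the last element of $S_i$ with the first element $(i,1)$ of $S_{i+1}$, and both then show that this gap equals exactly $1$ (the paper invokes LEMMA \ref{lemma1} for the parity bookkeeping that you carry out explicitly by splitting $i=2k$ versus $i=2k+1$). Your explicit computation of $k^2$ versus $k^2+1$ and $k^2+k$ versus $k^2+k+1$ checks out, so no gap.
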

\begin{proof}
By LEMMA \ref{lemma2}, it follows that for any $i \in \mathbb {N^+}\setminus\{ 1 \}$, we have
$$F(i-1,1)<F(i-2,2)<\cdots<F(i-\lfloor {\frac {i}{2}} \rfloor ,\lfloor {\frac {i}{2}} \rfloor),$$
where $F(m,n)$ is defined on $S_i$.
Similarly,
$$F(i,1)<F(i-1,2)<\cdots<F(i+1-\lfloor {\frac {i+1}{2}} \rfloor ,\lfloor {\frac {i+1}{2}} \rfloor),$$
where $F(m,n)$ is defined on $S_{i+1}$.
By LEMMA \ref{lemma1} we have
\begin{equation}\label{}
  F(i,1)-F(i-\lfloor {\frac {i}{2}} \rfloor ,\lfloor {\frac {i}{2}} \rfloor)=1,
\end{equation}
which implies $F(x_1,y_1)<F(x_2,y_2)$ for any $ (x_1,y_1) \in S_i$ and $(x_2,y_2) \in S_{i+1}$.
\end{proof}

\begin{Lemma}\label{lemma4}
For any $(x_1,y_1)\in S_i$ and $(x_2,y_2) \in S_j$, where $i<j$, we have
$$F(x_1,y_1)<F(x_2,y_2).$$
\end{Lemma}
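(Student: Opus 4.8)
The plan is to bootstrap LEMMA~\ref{lemma3}, which compares only adjacent levels $S_i$ and $S_{i+1}$, into a comparison across arbitrarily separated levels by transitivity. Since LEMMA~\ref{lemma2} (ordering within a fixed level) and LEMMA~\ref{lemma3} (the jump between consecutive levels) are already in hand, the natural route is induction on the gap $d=j-i\ge 1$.

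First I would take the base case $d=1$, which is precisely the statement of LEMMA~\ref{lemma3}. For the inductive step, assuming the claim for every gap equal to $d-1$ with $d\ge 2$, I would select any intermediate pair $(u,v)\in S_{j-1}$. Such a pair exists because, by DEFINITION~\ref{definition3}, the size of $S_k$ is $\lfloor k/2 \rfloor$, which is at least $1$ for every $k\ge 2$; here $j-1\ge i+1\ge 3$, so $S_{j-1}$ is nonempty. Applying the induction hypothesis to the levels $S_i$ and $S_{j-1}$, whose gap is $d-1$, gives $F(x_1,y_1)<F(u,v)$, and applying LEMMA~\ref{lemma3} to the adjacent levels $S_{j-1}$ and $S_j$ gives $F(u,v)<F(x_2,y_2)$. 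Chaining these strict inequalities yields $F(x_1,y_1)<F(x_2,y_2)$.

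An equivalent and perhaps more transparent formulation would be to record, for each level $k\ge 2$, that by LEMMA~\ref{lemma2} all values of $F$ on $S_k$ lie between its least value $F(k-1,1)$ and its greatest value $F(k-\lfloor k/2\rfloor,\lfloor k/2\rfloor)$, while LEMMA~\ref{lemma3} (through identity $(9)$) forces the greatest value on $S_k$ to be strictly below the least value on $S_{k+1}$. The value-ranges of consecutive levels are therefore disjoint and correctly ordered, so the complete list of $F$-values read off level by level is strictly increasing, and the comparison of any $S_i$ with any $S_j$ for $i<j$ is immediate.

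I do not expect a genuine obstacle here: the mathematical content is entirely contained in LEMMA~\ref{lemma2} and LEMMA~\ref{lemma3}, and LEMMA~\ref{lemma4} is essentially their transitive closure. The only point requiring a moment's care is the non-emptiness of the intermediate levels used as stepping stones in the induction, which is guaranteed by the size formula $|S_k|=\lfloor k/2\rfloor$.
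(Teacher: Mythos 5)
Your proposal is correct and is essentially the paper's argument: the paper writes the single chain $F(x_1,y_1) \le F(i-\lfloor i/2\rfloor,\lfloor i/2\rfloor) < F(j-1,1) \le F(x_2,y_2)$, which is exactly your ``disjoint, correctly ordered value-ranges'' formulation, with the middle inequality implicitly resting on the same transitive chaining through intermediate levels that your induction on the gap $d=j-i$ makes explicit. Your added check that the intermediate levels $S_k$ are nonempty is a small point of rigor the paper leaves tacit.
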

\begin{proof}
From LEMMA \ref{lemma2} and LEMMA \ref{lemma3}, it follows that
\[F(x_1,y_1) \le F(i-\lfloor {\frac {i}{2}} \rfloor ,\lfloor {\frac {i}{2}} \rfloor) <F(j-1,1) \le F(x_2,y_2).\]
\end{proof}

\begin{Lemma}\label{lemma5}
Let $ (x_1,y_1)\in S_{O}$,\ $(x_2,y_2)\in S_{O}$.  Let $(x_1,y_1)=S_{O}(a)$ and $(x_2,y_2)=S_{O}(b)$. If $a<b$, then
\[F(x_1,y_1)<F(x_2,y_2).\]
\end{Lemma}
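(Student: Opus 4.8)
The plan is to reduce the claim to the two cases already settled by LEMMA \ref{lemma2} and LEMMA \ref{lemma4}, splitting on whether the two pairs share the same coordinate sum. First I would set $i=x_1+y_1$ and $j=x_2+y_2$, so that $(x_1,y_1)\in S_i$ and $(x_2,y_2)\in S_j$ by DEFINITION \ref{definition4}(2). The whole argument then hinges on recognizing that the ordering of $S_{O}$ is exactly the lexicographic order that first sorts by the sum (via DEFINITION \ref{definition4}(4)) and, within a common block, by the smaller coordinate $y$ (via DEFINITION \ref{definition4}(3)); the hypothesis $a<b$ refers to this global $S_{O}$-index, and I must decode what it says about $i,j$ and about $y_1,y_2$.

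Next I would treat the case $i=j$. Here both pairs lie in the same block $S_i$, and distinct pairs of $S_i$ have distinct $y$-coordinates, since $x+y=i$ with $x\ge y$ forces $y$ to range over $1,\dots,\lfloor i/2\rfloor$. From $a<b$ I would conclude $y_1<y_2$: the alternative $y_1>y_2$ would, by the very same DEFINITION \ref{definition4}(3), yield $b<a$, a contradiction. By DEFINITION \ref{definition3}(2) the smaller $y$ is the smaller local index within $S_i$, so LEMMA \ref{lemma2} applies and gives $F(x_1,y_1)<F(x_2,y_2)$.

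If instead $i\neq j$, I would argue that $a<b$ together with DEFINITION \ref{definition4}(4) forces $i<j$: were $i>j$, applying that condition with the roles of the blocks reversed would give $b<a$, again a contradiction. Then LEMMA \ref{lemma4} immediately supplies $F(x_1,y_1)<F(x_2,y_2)$. Since the two cases $i=j$ and $i\neq j$ are exhaustive, the inequality holds whenever $a<b$, which is precisely the assertion that $F$ is binary strictly monotonous on $S_{O}$ in the sense of DEFINITION \ref{definition5}.

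I expect the only delicate point to be the bookkeeping in translating the global $S_{O}$-index ordering into the block-and-$y$ ordering, in particular arguing the contrapositive directions of DEFINITION \ref{definition4}(3) and (4) cleanly and keeping the global indices $a,b$ distinct from the block sums $i,j$ (the latter reuse the symbols $a,b$ in the statement of DEFINITION \ref{definition4}). Once those implications are read off correctly, everything downstream is a direct appeal to the two monotonicity lemmas already established.
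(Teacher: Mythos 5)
Your proposal is correct and follows essentially the same route as the paper, which simply states that the lemma ``could be easily deduced from LEMMA \ref{lemma2} and LEMMA \ref{lemma4}''; your case split on $i=j$ versus $i\neq j$ is exactly the intended deduction, spelled out in full.
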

\begin{proof}
It could be easily deduced from LEMMA \ref{lemma2} and LEMMA \ref{lemma4}.
\end{proof}


\begin{Theorem}\label{theorem1}
$F(m,n)$ is an injection from $S_{O}$ to $\mathbb {N^+}$.
\end{Theorem}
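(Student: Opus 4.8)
The plan is to show that $F$ never takes the same value on two distinct elements of $S_O$. The cleanest route is to leverage the total-ordering machinery already assembled in Lemmas~\ref{lemma2} through~\ref{lemma5}. Recall that $S_O$ is an ordered set, so any two distinct elements $(x_1,y_1)=S_O(a)$ and $(x_2,y_2)=S_O(b)$ carry indices $a\neq b$. Without loss of generality assume $a<b$. Then Lemma~\ref{lemma5} immediately gives $F(x_1,y_1)<F(x_2,y_2)$, so in particular $F(x_1,y_1)\neq F(x_2,y_2)$. Since distinct inputs always produce distinct outputs, $F$ is injective on $S_O$.

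Two small gaps need to be closed before that argument is airtight. First I would confirm that the map $S_O(i)\mapsto(x,y)$ is well-defined and that every element of $S_O$ has a unique index, which follows from Definition~\ref{definition4}: the ordering conditions (3) and (4) determine a strict linear order on $S_O$ (inner order by $y$ within each $S_i$, outer order by the value of $x+y$), so the enumeration $S_O(1),S_O(2),\dots$ assigns exactly one index to each pair. Second, I would verify the codomain claim, namely that $F(m,n)\in\mathbb{N^+}$ for every $(m,n)\in S_O$; this is where Lemma~\ref{lemma1} does its work, since the term $(m+n-1)^2-(m+n-1)\ \%\ 2$ is always a multiple of $4$ (the square of an integer minus its parity correction is divisible by $4$), making the first summand a non-negative integer, and $\min(m,n)\ge 1$ keeps the total positive.

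The essential monotonicity is entirely inherited from the preceding lemmas, so the proof reduces to citing Lemma~\ref{lemma5} together with the strict-order structure of $S_O$. I expect the main obstacle to be purely expository rather than mathematical: one must state precisely that strict monotonicity with respect to the index ordering on $S_O$ forces injectivity, i.e. that a strictly increasing function of a linearly ordered index is one-to-one. Concretely, the contrapositive form is clearest: if $F(x_1,y_1)=F(x_2,y_2)$ then neither $a<b$ nor $b<a$ can hold by Lemma~\ref{lemma5}, whence $a=b$ and $(x_1,y_1)=(x_2,y_2)$. I would present exactly this short contrapositive as the body of the proof, after the two verification remarks above, keeping the integrality check brief since it is routine given Lemma~\ref{lemma1}.
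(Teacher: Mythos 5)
Your proposal is correct and follows essentially the same route as the paper: the paper's proof also reduces injectivity to the strict monotonicity established in Lemma~\ref{lemma5}, noting that distinct elements of $S_{O}$ have distinct indices and hence distinct $F$-values. Your additional remarks on the well-definedness of the indexing and on the codomain (integrality and positivity of $F$) are sound refinements that the paper leaves implicit.
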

\begin{proof}
According to DEFINITION \ref{definition5}, LEMMA \ref{lemma5} shows that $F(m,n)$ is binary strictly montonous on domain of $S_{O}$. Now for any different $(x_1,y_1)$ and $(x_2,y_2)$ in $S_{O}$, suppose that
$(x_1,y_1)=S_{O}(a),\ (x_2,y_2)=S_{O}(b)$. Then clearly $a \ne b$.
By LEMMA \ref{lemma5}, we have
$$F(x_1,y_1) \ne F(x_2,y_2)$$
and the proof is complete.
\end{proof}

\begin{Theorem}\label{theorem2}
$F(m,n)$ is a surjection from $S_{O}$ to $\mathbb {N^+}$.
\end{Theorem}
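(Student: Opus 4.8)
The plan is to prove surjectivity by showing that $F$, read along the ordering of $S_{O}$, enumerates $\mathbb{N^+}$ with no gaps; precisely, I would prove by induction on $k$ that $F(S_{O}(k))=k$ for every $k\in\mathbb{N^+}$. Once this is established, surjectivity is immediate, since every positive integer $k$ is then the image of the pair $S_{O}(k)$.

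For the base case, note that $S_{2}=\{(1,1)\}$ forces $S_{O}(1)=(1,1)$, and direct substitution gives $F(1,1)=\frac{1}{4}[\,1^2-(1\ \%\ 2)\,]+\min(1,1)=1$.

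For the inductive step I would show $F(S_{O}(k+1))=F(S_{O}(k))+1$, splitting into two cases according to whether consecutive pairs lie in the same block $S_i$. If $S_{O}(k)$ and $S_{O}(k+1)$ both lie in $S_i$, then by the computation in the proof of LEMMA \ref{lemma2} the gap in $F$-values equals the gap in their second coordinates; since consecutive elements of $S_i$ have $y$-coordinate increasing by exactly one, the increment is $1$. If instead $S_{O}(k)=(i-\lfloor i/2\rfloor,\lfloor i/2\rfloor)$ is the last element of $S_i$ and $S_{O}(k+1)=(i,1)$ is the first element of $S_{i+1}$, then the identity derived in LEMMA \ref{lemma3} (itself a consequence of LEMMA \ref{lemma1}) states exactly that $F(i,1)-F(i-\lfloor i/2\rfloor,\lfloor i/2\rfloor)=1$.

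Combining the base case with the unit increment yields $F(S_{O}(k))=k$ for all $k$, so $F$ hits every positive integer and is surjective. I expect the block-boundary case to be the only genuine obstacle: inside a single $S_i$ the increment is transparent because the first summand of $F$ depends only on $x+y$, but across the boundary from $S_i$ to $S_{i+1}$ one must verify that the parity correction $(m+n-1)\ \%\ 2$ makes the jump exactly $1$ rather than larger, which is precisely the content already packaged in LEMMA \ref{lemma1} and invoked in LEMMA \ref{lemma3}. Strict monotonicity from LEMMA \ref{lemma5} then guarantees no value is repeated or skipped, closing the argument.
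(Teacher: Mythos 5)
Your proposal is correct and follows essentially the same route as the paper: both arguments show that consecutive elements in the ordering of $S_{O}$ have $F$-values differing by exactly $1$ (unit increments within each $S_i$ via LEMMA \ref{lemma2}, and a unit jump across the block boundary via LEMMA \ref{lemma3}), anchor the induction at $F(1,1)=1$, and conclude $F(S_{O}(k))=k$ for all $k$. The case split you describe is exactly the structure of the paper's induction.
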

\begin{proof}
For any $i \in \mathbb {N^+} \setminus \{ 1 \}$, by LEMMA \ref{lemma2}, it could be deduced that on each $S_i$,
\begin{align*}
1&=F(i-2,2)-F(i-1,1)\\
&=F(i-3,3)-F(i-2,2)\\
&=\cdots \\
&=F(i-\lfloor {\frac {i}{2}} \rfloor ,\lfloor {\frac {i}{2}} \rfloor)-F(i-\lfloor {\frac {i}{2}} \rfloor+1 ,\lfloor {\frac {i}{2}} \rfloor-1).
\end{align*}
Then due to the proof of LEMMA \ref{lemma3},
$$F(i,1)-F(i-\lfloor {\frac {i}{2}} \rfloor ,\lfloor {\frac {i}{2}} \rfloor)=1.$$
As a result of induction, for any $(x_1,y_1)\in S_{O}$ and $(x_2,y_2)\in S_{O}$ with $(x_1,y_1)=S_{O}(m)$ and $(x_2,y_2)=S_{O}(n)$, we have
$$F(x_1,y_1)-F(x_2,y_2)=m-n.$$
Since $(1,1)=S_{O}(1)$ and $f(1,1)=1$,
it could be inferred that
\begin{equation}
F(x,y)=k,~\forall (x,y)=S_{O}(k).
\end{equation}
Hence, the infinity of $S_{O}$ implies that $F(m,n)$ on $S_{O}$ traverses $\mathbb {N^+}$, as asserted.
\end{proof}

Summarizing, we have proved the following.
\begin{Theorem}\label{theorem3}
$F(m,n)$ is a bijection from $S_{O}$ to $\mathbb {N^+}$.
\end{Theorem}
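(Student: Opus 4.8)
The plan is essentially to harvest the two immediately preceding theorems, since bijectivity is by definition nothing more than the conjunction of injectivity and surjectivity over a common domain and codomain. First I would recall that THEOREM \ref{theorem1} establishes that $F(m,n)$ is an injection from $S_{O}$ to $\mathbb{N^+}$, and THEOREM \ref{theorem2} establishes that the same map $F(m,n)$ is a surjection from $S_{O}$ to $\mathbb{N^+}$. Because both statements are phrased with the identical domain $S_{O}$ and the identical codomain $\mathbb{N^+}$, no reconciliation of source or target sets is needed, and the two properties may be invoked simultaneously for one and the same function.

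The key and only substantive step is then to apply the standard set-theoretic fact that a map which is at once injective and surjective is a bijection. Combining THEOREM \ref{theorem1} and THEOREM \ref{theorem2} in this way yields directly that $F(m,n)\colon S_{O} \to \mathbb{N^+}$ is a bijection, which is exactly the assertion, completing the proof.

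I do not expect any genuine obstacle here, as the phrase preceding the statement ("Summarizing, we have proved the following") already flags that this theorem is a bookkeeping consolidation rather than a new argument. The only point requiring care is the purely formal check that the domain $S_{O}$ and codomain $\mathbb{N^+}$ coincide across the two cited theorems, which they plainly do. All the real content has been expended earlier: the strict monotonicity of LEMMA \ref{lemma5} drives the injection in THEOREM \ref{theorem1}, while the unit-increment computation $F(i,1)-F(i-\lfloor i/2\rfloor,\lfloor i/2\rfloor)=1$ together with the within-block differences drives the surjection in THEOREM \ref{theorem2}. Hence nothing beyond citing and conjoining those results remains to be done.
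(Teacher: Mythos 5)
Your proposal is correct and matches the paper exactly: the paper offers no separate argument for this theorem, merely the remark ``Summarizing, we have proved the following,'' which is precisely the consolidation of THEOREM \ref{theorem1} (injectivity) and THEOREM \ref{theorem2} (surjectivity) that you describe. Nothing further is needed.
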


\indent We now generalize $F(m,n)$ to the set of positive integer pairs.
\begin{Theorem}\label{theorem4}
For any $(x_1,y_1) \ne (x_2,y_2) \in S_{U}$,\ $F(m,n)$ on $S_{U}$ has two properties:\\
(1) $F(x_1,y_1)=F(x_2,y_2)$,\ if $(x_1,y_1)=(y_2,x_2);$\\
(2) $F(x_1,y_1)\ne F(x_2,y_2)$,\ if $(x_1,y_1)\ne (y_2,x_2).$
\end{Theorem}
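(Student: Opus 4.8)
The plan is to treat the two stated cases separately and, for the harder second case, to reduce everything to the bijection already established on $S_{O}$. Property (1) is just the symmetry of $F$ recorded immediately after its definition: if $(x_1,y_1)=(y_2,x_2)$ then $x_1=y_2$ and $y_1=x_2$, so $F(x_1,y_1)=F(y_2,x_2)=F(x_2,y_2)$, because both the term built from $m+n$ and the term $min(m,n)$ are invariant under interchanging the two arguments. I would dispatch this in a single line.

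For property (2) the key observation is that every pair $(m,n)\in S_{U}$ satisfies $F(m,n)=F(max(m,n),min(m,n))$ by symmetry, and the sorted pair $(max(m,n),min(m,n))$ lies in $S_{O}$, since $max(m,n)\ge min(m,n)$ and $m+n\ge 2$. Thus I would replace each of $(x_1,y_1)$ and $(x_2,y_2)$ by its sorted counterpart in $S_{O}$ without altering the value of $F$. The central step is then to show that two distinct, non-mirror pairs have distinct sorted counterparts: if $(max(x_1,y_1),min(x_1,y_1))=(max(x_2,y_2),min(x_2,y_2))$, then $\{x_1,y_1\}=\{x_2,y_2\}$ as multisets, which forces either $(x_1,y_1)=(x_2,y_2)$ or $(x_1,y_1)=(y_2,x_2)$, and both are excluded by hypothesis. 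Hence the two sorted pairs are distinct elements of $S_{O}$, and the injectivity of $F$ on $S_{O}$ from THEOREM \ref{theorem1} yields $F(x_1,y_1)\ne F(x_2,y_2)$.

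I expect the only delicate point to be this multiset case analysis in property (2): one must verify that collapsing two distinct non-mirror pairs onto a single sorted pair is genuinely impossible, while handling the degenerate situation $x_i=y_i$ in which a pair coincides with its own mirror (there the two exclusion clauses merge harmlessly). Everything else is a direct appeal to the symmetry of $F$ for the reduction to $S_{O}$ and to the injectivity already proved in THEOREM \ref{theorem1}, so no fresh computation with the closed form of $F$ should be needed.
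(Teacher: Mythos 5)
Your proposal is correct and follows essentially the same route as the paper: property (1) by the symmetry of $F$, and property (2) by sorting each pair into $S_{O}$ via $(max,min)$ and invoking the injectivity from THEOREM \ref{theorem1}. Your multiset argument actually makes explicit a step the paper only asserts, namely that the two sorted pairs cannot coincide.
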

\begin{proof}
By THEOREM \ref{theorem3} and the symmetry of $F(m,n)$, it can be seen that $F(m,n)$ is also a bijection from $S$-$S_{O}$ to $\mathbb {N^+}$. Since$$S_{U}=S_{O}\cup S \textrm{-} S_{O},$$
$F(m,n)$ is a surjection from $S_{U}$ to $\mathbb {N^+}$. Therefore, the first property follows immediately as
$$F(x_1,y_1)=F(y_2,x_2)=F(x_2,y_2).$$
Noticing that
$$F(x_1,y_1)=F(max(x_1,y_1),min(x_1,y_1)),$$
$$F(x_2,y_1)=F(max(x_2,y_2),min(x_2,y_2)),$$
$$(max(x_1,y_1),min(x_1,y_1)) \in S_{O},$$
$$(max(x_2,y_2),min(x_2,y_2)) \in S_{O},$$
and
$$(max(x_1,y_1),min(x_1,y_1))\ne (max(x_2,y_2),min(x_2,y_2)),$$
the second property is true by THEOREM $1$.
\end{proof}
Thanks to the preceding theorems, we illustrate a partial enumeration of $F(m,n)$ on $S_{U}$ as Figure 2 if $(m,n)$ are row-column indexing:
\begin{gather*}
\bordermatrix
{&1&2&3&4&5&6&7&8&9&10& \cdots \cr
1&1 &2&3&5&7&10&13&17&21&26& \cdots \cr
2&2  & 4&6&8&11&14&18&22&27&32& \cdots \cr
3&3  & 6 &9&12&15&19&23&28&33&39&\cdots \cr
4&5 &8&12&16&20&24&29&34&40&46&\cdots \cr
5&7&11&15&20&25&30&35&41&47&54&\cdots \cr
6&10&14&19&24&30&36&42&48&55&62&\cdots \cr
7&13&18&23&29&35&42&49&56&63&71&\cdots \cr
8&17&22&28&34&41&48&56&64&72&80&\cdots \cr
9&21&27&33&40&47&55&63&72&81&90&\cdots \cr
10&26&32&39&46&54&62&71&80&90&100&\cdots \cr
\vdots&\vdots&\vdots&\vdots&\vdots&\vdots&\vdots&\vdots&\vdots&\vdots&\vdots&\ddots \cr
}
\end{gather*}

\begin{center}
Figure 2: An enumeration of $F(m,n)$.
\end{center}


\begin{Theorem}\label{theorem5}
For any $C \in \mathbb {N^+}$, $F(m,n)=C$ has the only solution for $m\geq n$.
Let$$t= \lfloor {\sqrt{C}} \rfloor,$$
$$a=sgn(C-t^2),$$
and
$$b=sgn^{\prime}(C-t^2-t).$$
Then the only solution is
\begin{eqnarray}
\left\{\begin {array}{ll}
       m=(1-a)t+a[(t+1)(t+1+b)-C]\\
       n=(1-a)t+a[C-t(t+b)].\\
       \end {array}\right.
\end{eqnarray}
\end{Theorem}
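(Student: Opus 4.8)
The plan is to invert $F$ on $S_{O}$ explicitly and then read the solution off the resulting description. By THEOREM \ref{theorem3}, $F$ is a bijection from $S_{O}$ onto $\mathbb{N}^+$, and $S_{O}$ is exactly the set of pairs with $m \ge n$; hence for each $C$ there is a unique admissible pair, so uniqueness comes for free and only the explicit form must be justified. First I would rewrite $F$ along a fixed anti-diagonal: for $(m,n) \in S_{O}$ set $s = m+n$, so that $F(m,n) = g(s) + n$ where $g(s) = \frac{1}{4}[(s-1)^2 - (s-1)\%2]$ depends on $s$ alone. Splitting on the parity of $s$ and using LEMMA \ref{lemma1} to resolve the term $(s-1)\%2$, I would record the closed forms $g(2k) = k^2 - k$ and $g(2k+1) = k^2$.

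Next I would lay out the global arrangement of values. By DEFINITION \ref{definition3} the anti-diagonal $S_{s}$ contains $\lfloor s/2 \rfloor$ pairs, so on $S_{s}$ the function $F$ runs over the contiguous block $g(s)+1, \dots, g(s)+\lfloor s/2 \rfloor$; the parity formulas give $g(s+1) = g(s) + \lfloor s/2 \rfloor$, so consecutive blocks abut with no gaps (this is the quantitative core of THEOREM \ref{theorem2}). The decisive observation is that the largest value on the even anti-diagonal $S_{2t}$ is $g(2t) + t = t^2$, i.e. perfect squares sit precisely at the ends of even anti-diagonals. Consequently, writing $t = \lfloor \sqrt{C} \rfloor$ one has $t^2 \le C \le t^2 + 2t$, and $C$ lands in exactly one of three ranges: $C = t^2$ (the pair $(t,t)$ closing $S_{2t}$); $t^2 < C \le t^2 + t$ (anti-diagonal $S_{2t+1}$); and $t^2 + t < C \le t^2 + 2t$ (anti-diagonal $S_{2t+2}$, with its own top $(t+1)^2$ deferred to the next block).

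In each range I would compute $n = C - g(s)$ and $m = s - n$ and compare with the stated formula. Because $C \ge t^2$, the quantity $a = sgn(C - t^2)$ lies in $\{0,1\}$ and separates $C = t^2$ from $C > t^2$: when $a = 0$ the bracketed terms are annihilated and the formula returns $m = n = t$, matching the pair $(t,t)$. When $a = 1$, the quantity $b = sgn^{\prime}(C - t^2 - t) \in \{0,1\}$ selects the anti-diagonal via $s = 2t+1+b$, whence $n = C - t(t+b)$ and $m = (t+1)(t+1+b) - C$; these specialize to $((t+1)^2 - C,\ C - t^2)$ for $b=0$ and to $((t+1)(t+2) - C,\ C - t^2 - t)$ for $b = 1$, agreeing with the two direct computations. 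I expect the main obstacle to be organizational rather than conceptual: correctly assigning the three boundary values $C = t^2$, $C = t^2 + t$ and $C = t^2 + 2t$ to their proper anti-diagonals, and verifying that $sgn$ and $sgn^{\prime}$ trigger exactly the intended branches — in particular that $a$ never equals $-1$ since $C \ge t^2$, and that the $a = 0$ branch harmlessly absorbs the otherwise unneeded choice of $b$. Once the three cases are checked, the formula is established.
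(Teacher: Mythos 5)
Your proposal is correct and follows essentially the same route as the paper: the same three-way case split on $C$ relative to $t^2$ and $t^2+t$ (equivalently on the values of $a$ and $b$), with uniqueness delegated to THEOREM \ref{theorem3}. The only difference is one of completeness — where the paper merely asserts ``it can be proved that $F(m,n)=C$ always holds,'' you actually derive the inverse from the anti-diagonal block structure via $g(2k)=k^2-k$ and $g(2k+1)=k^2$, which supplies the computation the paper omits.
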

\begin{proof}
$$m-n=a(2t^2+2t+1+2bt+b-2C) \ge 0,$$
which implies  $m\ge n.$
Consider the following three cases:\\
(1) $a=0,\ b=0,\ if \ C=t^2$;\\
(2) $a=1,\ b=0,\ if \ t^2 \le C \ge t^2+t$;\\
(3) $a=1,\ b=1,\ if \ t^2+t<C<(t+1)^2$.\\
It can be proved that $$F(m,n)=C$$
always holds.
By THEOREM \ref{theorem3},\ we know that $(m,n)$, $m \ge n$,  is unique.
Thus, for any $C \in N^+$,
\begin{eqnarray}
\left\{\begin {array}{ll}
       m=(1-a)t+a[(t+1)(t+1+b)-C]\\
       n=(1-a)t+a[C-t(t+b)]\\
       \end {array}\right.
\end{eqnarray}
is the only solution to $$F(m,n)=C \ (m\ge n).$$
\end{proof}


\begin{Corollary}
\begin{equation}\label{}
    G(m,n)=\frac{1}{4}[(m+n+1)^2-(m+n+1)\ \% \ 2]+min(m,n)
\end{equation}
is a symmetric pairing function from $\mathbb {N}\times \mathbb {N}$ to $\mathbb {N}$ .
\end{Corollary}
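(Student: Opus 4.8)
The plan is to reduce the claim about $G$ entirely to the already-established facts about $F$ by a single shift of coordinates. Concretely, I would first prove the identity
\[
G(m,n)=F(m+1,n+1)-1 \qquad \text{for all } (m,n)\in \mathbb{N}\times\mathbb{N}.
\]
Granting this, everything follows from the observation that the map $\sigma:(m,n)\mapsto(m+1,n+1)$ is a bijection from $\mathbb{N}\times\mathbb{N}$ onto $S_{U}$ (the positive-integer pairs), while $\tau:x\mapsto x-1$ is a bijection from $\mathbb{N}^{+}$ onto $\mathbb{N}$. Since $G=\tau\circ F\circ\sigma$, the properties of $F$ transport directly.

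The one genuine computation is verifying the displayed identity, and it is immediate by substitution. Writing out $F(m+1,n+1)$ from the definition of $F$, the sum entering the first term is $(m+1)+(n+1)-1=m+n+1$, so that term is exactly $\tfrac14[(m+n+1)^2-(m+n+1)\ \%\ 2]$, which matches the first term of $G$. For the second term, $\min(m+1,n+1)=\min(m,n)+1$. Hence $F(m+1,n+1)=\tfrac14[(m+n+1)^2-(m+n+1)\ \%\ 2]+\min(m,n)+1=G(m,n)+1$, which is the identity.

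With the identity in hand I would then read off the three required properties. Symmetry $G(m,n)=G(n,m)$ is inherited because both $m+n$ and $\min(m,n)$ are symmetric in their arguments (equivalently, from the symmetry of $F$ noted after the definition of $F$). For the pairing property, THEOREM \ref{theorem4} states that on $S_{U}$ one has $F(x_1,y_1)=F(x_2,y_2)$ precisely when $(x_1,y_1)=(x_2,y_2)$ or $(x_1,y_1)=(y_2,x_2)$; pulling this back through the bijection $\sigma$ shows that $G(x_1,y_1)=G(x_2,y_2)$ exactly when $(x_1,y_1)$ and $(x_2,y_2)$ agree up to a swap. Finally, surjectivity onto $\mathbb{N}$ follows since $F$ is surjective onto $\mathbb{N}^{+}$ (THEOREM \ref{theorem3} together with THEOREM \ref{theorem4}) and $\tau$ carries $\mathbb{N}^{+}$ onto $\mathbb{N}$; thus $G$ attains every value of $\mathbb{N}$.

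I do not expect a real obstacle here: the entire content is the shift identity, and the only point requiring care is bookkeeping with the $\ \%\ 2$ term and the additive $+1$ from $\min$. The verification succeeds because the shift leaves the argument $m+n+1$ of the quadratic part unchanged while contributing exactly the $+1$ that $\tau$ then removes, so no parity mismatch can occur.
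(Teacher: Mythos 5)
Your proposal is correct, and it is more explicit than what the paper offers: the paper states this result as a bare Corollary with no proof at all, leaving the reader to infer that the whole apparatus of Definitions \ref{definition3}--\ref{definition6} and Lemmas \ref{lemma2}--\ref{lemma5} should be re-run on index sets shifted to start at $0$. Your single identity $G(m,n)=F(m+1,n+1)-1$ short-circuits that: the shift $\sigma:(m,n)\mapsto(m+1,n+1)$ keeps the argument of the quadratic part at $(m+1)+(n+1)-1=m+n+1$, contributes exactly $+1$ through $\min(m+1,n+1)=\min(m,n)+1$, and $\tau:x\mapsto x-1$ removes that $+1$, so $G=\tau\circ F\circ\sigma$ with both $\sigma$ and $\tau$ bijections. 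Symmetry, the swap-characterization of collisions (Theorem \ref{theorem4}), and surjectivity onto $\mathbb{N}$ (Theorems \ref{theorem2} and \ref{theorem3}) then transport verbatim. What your route buys is economy and a reusable template (any order-isomorphic relabeling of the domain paired with a shift of the codomain yields a new symmetric pairing function, which is in fact the ``general method'' the Conclusion alludes to); what the paper's implicit route would buy is a self-contained enumeration of $G$ on its own ordered sets, which is only needed if one wants to re-derive the inverse formula of the final Corollary from scratch rather than by conjugating the inverse of $F$ with $\sigma$ and $\tau$.
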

Similarly, $G(m,n)$ follows with a parallel pattern of enumeration as what can be told from Figure 3. Note that both the row and column indexes start not from $1$ but from $0$.
\begin{gather*}
\bordermatrix{&0&1&2&3&4&5&6&7&8&9& \cdots \cr
               0&0 &1&2&4&6&9&12&16&20&25& \cdots \cr
               1&1  & 3&5&7&10&13&17&21&26&31& \cdots \cr
               2&2  & 5 &8&11&14&18&22&27&32&38&\cdots \cr
               3&4 &7&11&15&19&23&28&33&39&45&\cdots \cr
4&6&10&14&19&24&29&34&40&46&53&\cdots \cr
5&9&13&18&23&29&35&41&47&54&61&\cdots \cr
6&12&17&22&28&34&41&48&55&62&70&\cdots \cr
7&16&21&27&33&40&47&55&63&71&79&\cdots \cr
8&20&26&32&39&46&54&62&71&80&89&\cdots \cr
9&25&31&38&45&53&61&70&79&89&99&\cdots \cr
\vdots&\vdots&\vdots&\vdots&\vdots&\vdots&\vdots&\vdots&\vdots&\vdots&\vdots&\ddots \cr
            }
\end{gather*}

\begin{center}
Figure 3: An enumeration of $G(m,n)$.
\end{center}


\begin{Corollary}
Given any $C \in \mathbb {N}$, there is only one solution to $$G(m,n)=C \ (m\in \mathbb {N},\ n\in \mathbb {N},\ m\ge n).$$
Let$$t= \lfloor {\sqrt{C}} \rfloor,$$
$$a=sgn(C-t^2),$$
$$b=sgn^{\prime}(t^2+t-C),$$
and
$$c=sgn(t).$$
The solution is listed as
\begin{eqnarray}
\left\{\begin {array}{ll}
       m=c\{(2t-1)(1-a)+a[t(t+3)-(t+1)b-C]\}\\
       n=ca[C-t(t+1-b)]\\
       \end {array}\right.
\end{eqnarray}
\end{Corollary}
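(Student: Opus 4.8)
The plan is to exploit the simple shift relating $G$ to $F$: writing out the definitions, one checks directly that
\[ G(m,n)=F(m+1,n+1)-1, \]
since $(m+1)+(n+1)-1=m+n+1$ and $\min(m+1,n+1)=\min(m,n)+1$. In particular $G(m,n)=C$ with $m\ge n$ is equivalent to $F(m+1,n+1)=C+1$ with $m+1\ge n+1$, so existence and uniqueness of the solution follow at once from THEOREM \ref{theorem3}. The remaining task is purely to verify that the stated closed form returns this unique pair.

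Next I would recover the diagonal structure of $G$ on $\mathbb{N}\times\mathbb{N}$. Grouping pairs by the anti-diagonal $s=m+n$, the $G$-analogue of LEMMA \ref{lemma2} shows that along a fixed diagonal the value of $G$ rises by exactly $1$ as $n$ runs from $0$ up to $\lfloor s/2\rfloor$. The decisive computation is the head of each diagonal, $G(s,0)=\frac{1}{4}[(s+1)^2-(s+1)\%2]$: for an odd diagonal $s=2t-1$ this equals $t^2$, while for an even diagonal $s=2t$ it equals $t^2+t$. Hence the squares $t^2$ and the pronic numbers $t^2+t$ are exactly the diagonal heads, and setting $t=\lfloor\sqrt{C}\rfloor$ localizes $C$ to the band $t^2\le C<(t+1)^2$; the indicators $a=sgn(C-t^2)$ and $b=sgn^{\prime}(t^2+t-C)$ then read off which of the two diagonals $s=2t-1$ or $s=2t$ contains $C$ and how far along it lies.

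With this bookkeeping in place I would treat the three regimes in turn, in each case substituting the corresponding values of $a,b,c$ into the single combined formula and confirming that it collapses to the diagonal answer. When $C=t^2$ (so $a=0$) the point is the head $(2t-1,0)$ of the diagonal $s=2t-1$, and here the factor $c=sgn(t)$ serves only to override the degenerate case $C=0$, forcing $(m,n)=(0,0)$. When $t^2<C<t^2+t$ (so $a=1,\,b=1$) the offset along $s=2t-1$ is $n=C-t^2$, giving $(m,n)=(t^2+2t-1-C,\;C-t^2)$. When $t^2+t\le C<(t+1)^2$ (so $a=1,\,b=0$) the offset along $s=2t$ is $n=C-t^2-t$, giving $(m,n)=(t^2+3t-C,\;C-t^2-t)$. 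In each regime a short calculation confirms both $G(m,n)=C$ and $0\le n\le m$.

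The main obstacle is the parity bookkeeping concealed in the $\%\,2$ term: the diagonal heads obey two different quadratics according as $s$ is odd or even, and one must confirm that the lone closed form, through its three settings of $(a,b,c)$, reproduces both branches together with the $C=0$ degeneracy handled by $c$. Once the head values $G(s,0)$ are pinned down by parity, the remaining case checks are routine algebra, and uniqueness is already in hand from the correspondence with $F$.
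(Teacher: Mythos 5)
Your proposal is correct, and it takes a cleaner route than the paper, which in fact states this corollary without any proof at all (the reader is left to infer an argument ``similar to'' the one for THEOREM \ref{theorem5}, whose own proof is a bare three-case verification with the intermediate algebra suppressed and with a typo in the case list). Your key move --- the shift identity $G(m,n)=F(m+1,n+1)-1$ --- is a genuine addition: it delivers existence and uniqueness of the solution with $m\ge n$ immediately from THEOREM \ref{theorem3}, rather than re-running the whole diagonal machinery for $G$, and it simultaneously proves the preceding corollary that $G$ is a bijection from $\{(m,n)\in\mathbb{N}^2: m\ge n\}$ onto $\mathbb{N}$. Your verification of the closed form is also sound where it is easiest to go wrong: the diagonal heads $G(2t-1,0)=t^2$ and $G(2t,0)=t^2+t$ are computed correctly from the parity of the $\%\,2$ term; the boundary value $C=t^2+t$ is correctly assigned to the regime $b=0$ (note the corollary's $b=sgn^{\prime}(t^2+t-C)$ has the opposite sign convention from the $b$ in THEOREM \ref{theorem5}, a trap you avoid); and the role of $c=sgn(t)$ in killing the spurious value $m=2t-1=-1$ when $C=0$ is identified precisely. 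The inequalities $0\le n\le m$ in the two nondegenerate regimes reduce to $C\le t^2+t-1$ and $C\le t^2+2t$ respectively, both of which hold, so the ``short calculation'' you defer does indeed close. What the paper's (implicit) direct-verification approach would buy is independence from the results on $F$; what yours buys is brevity and a structural explanation of why the inversion formula for $G$ is a shifted copy of the one for $F$.
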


\section{Conclusions}\label{sec:concl}

In this paper, we indirectly introduce a general method to construct a symmetric binary pairing function mapping $D \times D$ onto $D$, where $D$ is a set of integers, illuminated by the construction of Cantor pairing function. Specially we present two pairing functions mapping $\mathbb {N^+} \times \mathbb {N^+}$ onto $\mathbb {N^+}$ and $\mathbb N \times \mathbb N$ onto $\mathbb N$ respectively, the former of which has been given a complete demonstration with the technique of set theory. These two pairing functions both have a property of symmetry shown as $F(m,n)=F(n,m)$ if denoted by $F(m,n)$. They are invertible and the corresponding inverse formulas are given.

\bibliographystyle{elsarticle-num}
\bibliography{bib}

\end{document}